\title[   ]{ Existence of  group nonexpansive retractions  and ergodic  theorems  in    topological groups}
 \author{  Ebrahim  Soori$^{*,1}$,  Ravi   Agarwal$^{2}$  $\,\,$ and Donal O'Regan$^3$       }
 \thanks{ \!\! \!\! \!\! \!\!1 Corresponding author  \\2010 Mathematics Subject Classification: 47H09,47H10
  \\ E-mail addresses:  Ravi.Agarwal@tamuk.edu(Ravi   Agarwal),  donal.oregan@nuigalway.ie (Donal O'Regan),   sori.e@lu.ac.ir, sori.ebrahim@yahoo.com (E. Soori).}
\theoremstyle{plain}
\newtheorem{lem}{\textbf{Lemma}}[section]
\newtheorem{thm}[lem]{\textbf{Theorem}}
\newtheorem{co}[lem]{\textbf{Corollary}}
\newtheorem{de}[lem]{\textbf{Definition}}
\newtheorem{ex}[lem]{\textbf{Example}}
\theoremstyle{definition}
\theoremstyle{definition}
\theoremstyle{remark}
\renewcommand{\sc}{\mathcal{S}}
\begin{document}
\begin{large}

\maketitle
%\date{January 1, 2011}
%----------additions
%\dedicatory{To my boss}
%%% ----------------------------------------------------------------------
 %\begin{center}
 %\begin{normalsize}
\noindent
\\
$^1$Department  of Mathematics, Lorestan University,  P. O. Box 465, Khoramabad, Lorestan, Iran.\\
$^2$Department of Mathematics, Texas A and M University-Kingsville, Texas 78363, USA.\\
$^3$School of Mathematics, Statistics  and Applied Mathematics, National University of Ireland, Galway, Ireland.\\
 %\end{normalsize}
 %\end{center}

\vspace{2mm}

\begin{abstract}

\begin{normalsize}
 Suppose that  $G$ is a    topological group and  $ C $       a      compact subset                 of $G$. In this paper we define group nonexpansive mappings and then  we consider $\sc = \{T_{i} : i \in I \}$ as a family of the  group  nonexpansive mappings on $C$.    Also  we study  the existence of  group nonexpansive retractions  $P_{i}$ from $C$ onto
$\text{Fix}(\sc)$   such that $P_{i}T_{i} = T_{i}P_{i} = P_{i}$.

\end{normalsize}
\end{abstract}
\begin{normalsize}
   \textbf{Keywords}:  Fixed point;  Group nonexpansive mapping;   Topologial group; Retraction.
   \end{normalsize}
%%% ----------------------------------------------------------------------
%%% ----------------------------------------------------------------------
%\tableofcontents

\section{ Introduction}
  A topological group $G$ is a set endowed with two structures, a group
structure and a topological structure. Specifically, $G$ is both an abstract group and
a topological space such that the two maps
\begin{align*}
  G\times G \rightarrow G:  (x,y) \mapsto xy\\
   G \rightarrow G:  x \mapsto x^{-1}\\
\end{align*}
are assumed to be continuous. Also, the Hausdorff condition will be imposed.

Let $D$ be a subset of $B$ where $B$ is a subset of a  topological group $G$. A mapping  $P$  is called  a retraction of $B$ onto $D$,  if
$Px = x$ for each $x \in D$.

The first nonlinear ergodic theorem for nonexpansive mappings in a Hilbert space was established by Baillon \cite{bia}: Let $C$
be a nonempty closed convex subset of a Hilbert space $H$ and let $T$ be a nonexpansive mapping of $C$ into itself. If the set
$\text{Fix}(T)$ of fixed points of $T$ is nonempty, then for each $x \in C$, the Cesaro means
$
S_{n}x=\frac{1}{n}\sum_{k=1}^{n}T^{k}x
$
converge weakly to some $y \in \text{Fix}(T)$. In Baillon's theorem, putting $y = Px$ for each $x \in C$, $P$ is a nonexpansive retraction of $C$
onto $\text{Fix}(T)$ such that $PT^{n} = T ^{n} P = P$ for all positive integers $n$ and $Px \in \overline{co}\{T ^{n}x: n = 1, 2, . . .\}$ for each $x \in C$. Takahashi \cite{tak1}
proved the existence of such retractions, “ergodic retractions”, for non-commutative semigroups of nonexpansive mappings
in a Hilbert space: If $S$ is an amenable semigroup, $C$ is a closed, convex subset of a Hilbert space $H$ and $\sc= \{T_{s}: s \in S\}$
is a nonexpansive semigroup on $C$ such that $\text{Fix}(\sc)\neq \emptyset$, then there exists a nonexpansive retraction $P$ from $C$ onto $\text{Fix}(\sc)$
such that $PT_{t} = T _{t} P = P$ for each $t \in S$  and $Px \in \overline{co}\,\{T_{t}x:t\in S\}$    for each $x \in C$. These results were extended to uniformly
convex Banach spaces for commutative semigroups in \cite{hi}  and for non-commutative amenable semigroups in \cite{lst, lnt} and recently  for a family of $Q$-nonexpansive mappings in  locally convex spaces in \cite{soori}. For
other results we refer the reader to  \cite{said2012, said11, said, said33}.

 In this paper,  first we define group
nonexpansive mappings. Then  we establish some  ergodic retractions  for  topological groups, based on the definition. Also we present a family of desired retractions by removing  "convexity" in the above mentioned theorems  in the  topological group setting.
\section{preliminaries }

In this section,  we introduce our definition and give some examples:
\begin{de}
 Suppose that   $G$     is a topological group  and  $ C \subset G $.    A mapping $T : C  \rightarrow C$ is said to be
    \textbf{group nonexpansive} if for   each  $x, y \in C$  and each closed  neighborhood $U \in \mathfrak{B}_{e}$ (where $ \mathfrak{B}_{e}$ is a local base in $e$ (identity element)) that $ xy^{-1} \in U$ then we have  $Tx (Ty)^{-1} \in U$.
\end{de}
\begin{ex}
Our group nonexpansiveness is more general than    nonexpansiveness i.e.  every   nonexpansive mapping   is a
  group nonexpansive mapping if the topological group is a normed vector space. Indeed, let $E$ be a normed vector space and  $ C \subset E $. Consider the  closed  neighborhood $U=\{z \in E: \|z\|\leq r \} \in \mathfrak{B}_{0}$ (where $ \mathfrak{B}_{0}$ is a local base in $0$)  that $ x-y \in U$ for a positive number $r$. If $T$ is a nonexpansive mapping on $C$ then we have
 \begin{equation*}
    \|T x -T y\|\leq \|x-y\|,
  \end{equation*}
 and  hence $ Tx-Ty \in U$. Thus $T$ is a group nonexpansive.
\end{ex}
In the following example we consider a case that  some  group nonexpansive mappings are    nonexpansive mapping.
  \begin{ex}
  Let $G$ be metric    topological group with a right invariant  metric (that is, $d(yx ,  zx) = d(y , z)$ for all $x, y, z  \in G$).  Let   $    T$ be a group nonexpansive mapping from $G$ into $G$.   Consider the neighbourhoods $N_{d(xy^{-1},e)+\frac{1}{n}}(e)$ of $e$ with  $xy^{-1} \in N_{d(xy^{-1},e)+\frac{1}{n}}(e)$ for each $n \in \mathbb{N}$.  Then we have $Tx (Ty)^{-1} \in N_{d(xy^{-1},e)+\frac{1}{n}}(e)$ for each $n \in \mathbb{N}$, and hence
   \begin{equation*}
    d(T x  (T y)^{-1}, e) \leq d(xy^{-1}, e) + \frac{1}{n},
  \end{equation*}
 for each $n \in \mathbb{N}$. Therefore
  \begin{equation*}
    d(T x  (T y)^{-1}, e)\leq d(xy^{-1}, e),
  \end{equation*}
  and since $d$ is right invariant, we have
  \begin{equation*}
    d(T x ,T y)\leq d(x,y).
  \end{equation*}
 Then
  we conclude that  $T$  is a nonexpansive mappings in the sense of nonexpansive mappings in metric spaces.
 \end{ex}

\section{\textbf{Ergodic retractions for families of group  nonexpansive mappings on topological groups}}
Let   $G$ be a topological group. In this section, we study the existence of  group nonexpansive retractions onto the set of common fixed points of a family of
 group nonexpansive mappings that commute with the mappings. A  group nonexpansive retraction that commutes with the mappings is usually called an ergodic retraction.

First, we prove the following theorem
which is the main result of this section and  will be essential in the sequel.
\begin{thm}\label{shlp}
Let $G$ be a topological group and let $ C $   be   a      compact subset                 of $G$.
     Suppose that $\sc = \{T_{i} : i \in I \}$ is a family of the  group  nonexpansive mappings on $C$ such that
$\text{Fix}(\sc)\neq \emptyset$ and for every $\alpha \in I$,   there exists a subnet $\{T_{\alpha}^{n_{\gamma}}\}$ of the sequence $\{T_{\alpha}^{n}\}$ such that $\displaystyle\lim _{\gamma}T_{\alpha}^{n_{\gamma}}x=\displaystyle\lim _{\gamma}T_{\alpha}^{n_{\gamma}-1}x$ for each $ x \in C$.   Also suppose
   for every nonempty    compact   $\sc $-invariant subset $K$ of $C$,
$K \cap \text{Fix}(\sc) \neq \emptyset$.
Then, for each $i \in I$, there exists a  group nonexpansive retraction $P_{i}$ from $C$ onto
$\text{Fix}(\sc)$, such that $P_{i}T_{i} = T_{i}P_{i} = P_{i}$  and every   closed   $\sc$-invariant
subset of $C$ is also $P_{i}$-invariant.
\end{thm}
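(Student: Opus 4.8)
The plan is to attach to every $x\in C$ a single common fixed point $P_i x$ determined by the forward behaviour of $T_i$, so that the required identities become almost formal and the whole weight of the argument falls on showing that this assignment is well defined and group nonexpansive. First I would record the routine facts that every group nonexpansive map is continuous, that the defining implication is preserved under pointwise limits of nets of such maps, and that closed $\sc$-invariant subsets of the compact set $C$ are compact. The one structural lemma I need is that every nonempty closed $\sc$-invariant $M\subseteq C$ contains a minimal one, and that a minimal such set is a singleton in $\text{Fix}(\sc)$: ordering by reverse inclusion, chains have the finite intersection property so Zorn applies, and if $M$ is minimal the hypothesis yields $p\in M\cap\text{Fix}(\sc)$, whence $\{p\}\subseteq M$ is closed and $\sc$-invariant and minimality forces $M=\{p\}$.

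Fix $i\in I$. For $x\in C$ write $K(x)$ for the smallest closed $\sc$-invariant set containing $x$, i.e. the closure of the orbit of $x$ under all finite words in the $T_j$. Since $T_i x\in K(x)$, the sets $K(T_i^{n}x)$ decrease with $n$, so $K_{\infty}(x):=\bigcap_{n\ge 0}K(T_i^{n}x)$ is a nested intersection of nonempty compact $\sc$-invariant sets and is therefore itself nonempty, compact and $\sc$-invariant; by hypothesis it meets $\text{Fix}(\sc)$. The point of this construction is that it is invariant under the shift $x\mapsto T_i x$: deleting the first (largest) term leaves the intersection unchanged, so $K_{\infty}(T_i x)=K_{\infty}(x)$. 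I would therefore define $P_i x$ to be a common fixed point lying in $K_{\infty}(x)\cap\text{Fix}(\sc)$. Granting this is well defined, every asserted identity follows immediately: $P_i x\in\text{Fix}(\sc)$ gives $T_iP_i=P_i$; $K_{\infty}(T_ix)=K_{\infty}(x)$ gives $P_iT_i=P_i$; for $x\in\text{Fix}(\sc)$ one has $K(T_i^{n}x)=\{x\}$ so $P_ix=x$ and $P_i$ is a retraction; and $P_i x\in K_{\infty}(x)\subseteq K(x)\subseteq D$ for every closed $\sc$-invariant $D\ni x$, which is exactly the required $P_i$-invariance.

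The real work is to make the selection $P_i x$ canonical and group nonexpansive, and here I would use the monotonicity built into the definition of group nonexpansiveness. Because every $T_j$ fixes each $p\in\text{Fix}(\sc)$, one has that $zp^{-1}\in U$ implies $T_j z\,p^{-1}\in U$ for every closed $U\in\mathfrak{B}_e$; in particular the collection of closed $U$ containing $T_i^{n}x\,p^{-1}$ can only grow with $n$, so the group-uniform proximity of the iterates $T_i^{n}x$ to each common fixed point is non-decreasing. I would combine this with the minimal-set lemma applied inside $K_{\infty}(x)$ to show that $K_{\infty}(x)\cap\text{Fix}(\sc)$ reduces to a single point $p(x)$, making $P_i x:=p(x)$ unambiguous. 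Group nonexpansiveness of $P_i$ should then follow by realizing $p(x)$ as a limit of group nonexpansive images of $x$ lying in $K(x)$ and transporting the relation $xy^{-1}\in U$ through those images into the closed neighbourhood $U$ in the limit.

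The step I expect to be the genuine obstacle is exactly this collapse of $K_{\infty}(x)\cap\text{Fix}(\sc)$ to one point, together with the uniform coherence of the choice as $x$ varies. Since the family $\sc$ is not assumed to commute, the forward $T_i$-orbit of $x$ may accumulate on a $T_i$-invariant set that fails to be $\sc$-invariant, so a common fixed point cannot be extracted as a naive orbit limit. The resolution has to couple the invariant-set hypothesis, which forces a common fixed point into every compact $\sc$-invariant set, with the group-nonexpansive monotonicity, which forbids the iterates from drifting around at a constant uniform distance from those fixed points; only their combination both singles out a unique common fixed point and makes it depend group nonexpansively on $x$. Establishing this coupling, rather than any of the formal identities, is where the proof will stand or fall.
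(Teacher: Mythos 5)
There is a genuine gap, and it sits exactly where you predicted. Your construction hinges on the claim that $K_{\infty}(x)\cap\text{Fix}(\sc)$ collapses to a single point $p(x)$, but the minimal-set lemma does not deliver this: it shows that \emph{some} minimal closed $\sc$-invariant subset of $K_{\infty}(x)$ is a singleton $\{p\}$ with $p\in\text{Fix}(\sc)$, yet every common fixed point lying in $K_{\infty}(x)$ generates such a minimal singleton, so uniqueness is not forced. In fact the claim is false in the paper's own Example 4.3: there $\text{Fix}(\sc)=[0,1]$, and for $y=T_ix\in(1,\tfrac{2i}{2i-1})$ the orbit closure $K(y)=K_\infty(x)$ contains the infinitely many distinct points $2-y+\tfrac{y}{m}\in[0,1]$ for all large $m$, so $K_{\infty}(x)\cap\text{Fix}(\sc)$ is infinite. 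The ``monotonicity'' observation (that the closed neighbourhoods witnessing proximity of $T_i^{n}x$ to a fixed point can only grow with $n$) is true but only says the iterates do not move away from any given fixed point; it cannot select one. Consequently neither the well-definedness of $P_i$ nor its group nonexpansiveness is established, and the formal identities you derive all rest on the unproved selection.

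The paper's proof sidesteps this by never trying to canonize a point of an orbit closure. It works in the function space: it forms the set $\mathfrak{R}$ of group nonexpansive self-maps $T$ of $C$ with $T\circ T_{\alpha}=T$ that preserve every closed $\sc$-invariant set, shows $\mathfrak{R}$ is a nonempty compact subset of $\prod_{x\in C}C_x$ (nonemptiness via a convergent subnet of the iterates $T_{\alpha}^{\,n-1}$), and extracts a Bruck-style minimal element $P_{\alpha}$ for the preorder ``$T_1\preceq T_2$ iff $T_1$ is at least as contractive as $T_2$ on every pair.'' Group nonexpansiveness and the identity $P_{\alpha}T_{\alpha}=P_{\alpha}$ are then built in from the start, and the invariant-set hypothesis is used only once: the compact $\sc$-invariant set $K=\{T(P_{\alpha}x):T\in\mathfrak{R}\}$ meets $\text{Fix}(\sc)$, giving $L\in\mathfrak{R}$ with $L(P_{\alpha}x)\in\text{Fix}(\sc)$, and minimality of $P_{\alpha}$ compared against $L\circ P_{\alpha}$ forces $P_{\alpha}x=L(P_{\alpha}x)$ itself to be the common fixed point. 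If you want to salvage your approach, the missing ingredient is precisely some substitute for this minimality-in-function-space argument; selecting inside $K_{\infty}(x)$ alone will not work.
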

\begin{proof}

 Let  $C^{C}$ be the product space with  the product topology induced by the   relative  topology  on  $C$. Now for a fixed $\alpha \in I$, consider the following set

  $\mathfrak{R}=\{T \in C^{C}: T \; \text{is group nonexpansive}, T\circ T_{\alpha}=T\\
  \text{   } \text{ and every    closed   $\sc$-invariant subset of $C$ is also $T$ -invariant}\}$.\\
   From the fact that $G$ is Hausdorff,   for each $z \in \text{Fix}(\sc)$, the singleton set  $\{z\}$ is a    closed   $\sc$-invariant subset of $C$,  and  then for each    $T \in \mathfrak{R}$, $Tz=z$. Fix $z_{0} \in \text{Fix}(\sc)$ and let for each $x \in C$,
\begin{align*}
C_{x}:=&\{y \in C :      \text{for each closed neighborhood } U \;  \text{of}\; e \; \text{that} \; x z_{0}^{-1} \in U \\& \text{then} \; yz_{0}^{-1}  \in U\}.
\end{align*}
      For all   $x \in C$ and $T \in \mathfrak{R}$, we have that  $T(x) \in  C_{x}$. Since $T$ is group nonexpansive for a closed neighborhood $U$ of $e$, if $ xz_{0}^{-1} \in U$  then   $T(x)z_{0}^{-1} = T(x)(T(z_{0}))^{-1}\in U$. Hence $\mathfrak{R} \subseteq \prod_{x \in C} C_{x}$, where  $\prod_{x \in C} C_{x}$ is the Cartesian product of sets $C_{x}$ for all  $x \in C$.  Let $\{y_{\beta}\}$ be a net in $C_{x}$  such that $y_{\beta}\rightarrow y$. Consider   a closed neighborhood $U$  of $e$ such that $x z_{0}^{-1} \in U$. Then we have $y_{\beta}z_{0}^{-1}\in U$.  From the fact that the mapping $(x, y) \mapsto xy $ is continuous we have $y_{\beta}z_{0}^{-1} \rightarrow yz_{0}^{-1}$    and since $U$ is closed we conclude that $yz_{0}^{-1}\in  U$, and therefore $C_{x}$ is closed  and    since $C$ is     compact  we conclude that  $C_{x}$ is    compact. By Tychonoff's theorem, we know that  when $C_{x}$ is given the  relative   topology and $ \prod_{x \in C} C_{x}$ is given the corresponding product topology, $ \prod_{x \in C} C_{x}$ is   compact. Next we prove that  $\mathfrak{R}$ is   closed in $\prod_{x \in C} C_{x}$. Let $\{ T_{\lambda}: \lambda \in \Lambda\}$ be a net in $\mathfrak{R}$ which  converges to $T_{0}$ in  $ \prod_{x \in C} C_{x}$. Hence if $z \in \text{Fix}(\sc)$, then we have  $T_{\lambda}z=z$ for each $ \lambda \in \Lambda$ (because  $T_{\lambda} \in \mathfrak{R})$ and   $T_{0}z=  \displaystyle \lim_{\lambda}T_{\lambda}(z)=z$. From the fact that the mapping $(x, y) \mapsto xy $ and $x \mapsto x^{-1}  $  are continuous, if we consider  a  closed  neighborhood $U$ of $e$ that  $x y^{-1} \in U$
        then we have $T_{0}x(T_{0}y)^{-1} = \displaystyle\lim_{\lambda}  T_{\lambda}x(T_{\lambda}y)^{-1}\in  U$. Hence,  $T$ is group nonexpansive. Obviously, we have  $T_{0}\circ T_{\alpha}=T_{0}$  and every   closed   $\sc$-invariant
subset of $C$ is also $T_{0}$-invariant. Therefore, $T_{0} \in \mathfrak{R}$. Then $\mathfrak{R}$ is closed in $\prod_{x \in C} C_{x}$.
Since $\prod_{x \in C} C_{x}$ is compact, hence   $\mathfrak{R}$ is compact.  Next, we show that  $\mathfrak{R}\neq \emptyset$.
 Consider the mappings $S_{n}=T^{n-1}_{\alpha} \in \prod_{x \in C} C_{x}$ for each $n \in \mathbb{N}$. Then from the fact that     $\prod_{x \in C} C_{x}$ is compact and using our condition,
  it has a   convergent subnet $\{S_{n_{\eta}}\}$ such that $\displaystyle\lim_{\eta} T_{\alpha}^{n_{\eta}} x =\displaystyle\lim_{\eta} T_{\alpha}^{n_{\eta}-1} x $ for each $x \in C$.
Define for each $x \in C$, $T (x) = \lim_{\eta} S_{n_{\eta}}x$. We now check that $T \in \mathfrak{R}$. Note that, from  the    continuity
of    the mapping $(x, y) \mapsto xy $ and $x \mapsto x^{-1}  $ and the group nonexpansiveness of $S_{n_{\eta}}$ and closedness of $U$, $T$ is  group nonexpansive. Indeed, if $xy^{-1}\in U$ for any closed neighbourhood $U$ of $e$ and $x,y \in C$, then we have  $\displaystyle Tx(Ty)^{-1}=\lim_{\eta} S_{n_{\eta}}x(\lim_{\eta} S_{n_{\eta}}y)^{-1}=\lim_{\eta}S_{n_{\eta}}x( S_{n_{\eta}}y)^{-1} \in U$.
 Moreover, $T (T_{\alpha}x) = \displaystyle\lim_{\eta} S_{n_{\eta}}(T_{\alpha} x) =\displaystyle\lim_{\eta} T_{\alpha}^{n_{\eta}} x =\displaystyle\lim_{\eta} T_{\alpha}^{n_{\eta}-1} x=
\displaystyle\lim_{\eta} S_{n_{\eta}}( x) = T (x)$.
Finally, if $D$ is a closed $\sc$-invariant
subset of $C$, it is clear that $D$ is $S_{n_{\eta}}$-invariant and thus from the closedness of $D$, is $T$-invariant. Therefore, we have shown that $T\in \mathfrak{R}\neq \emptyset$.

Now define a preorder $\preceq$ in $\mathfrak{R}$ by $T_{1} \preceq T_{2}$ if for each $U \in \mathfrak{B}_{e} $  that $ T_{2}x(T_{2}y)^{-1} \in U$   we have $T_{1}x (T_{1}y)^{-1}\in U$, and by  using a method similar to Bruck's method \cite{broook}, we find a minimal element $T_{min}$ in  $\mathfrak{R}$. Indeed, using Zorn's Lemma, it is enough that we   show that each  linearly ordered subset of $\mathfrak{R}$ has
a lower bound in  $\mathfrak{R}$. Let $\{ A_{\lambda}\}$ be a linearly ordered subset of $\mathfrak{R}$. Then the family of sets  $\{T \in \mathfrak{R}:  T \preceq A_{\lambda}\}$ is a linearly ordered  subset of  $\mathfrak{R}$  by inclusion. Taking  into account  the  closeness proof of  $\mathfrak{R}$  in
 $\prod_{x \in C} C_{x}$, these sets are closed in  $\mathfrak{R}$,  and
hence   compact.
  Then from the finite intersection property, there exists  $R \in\bigcap_{\lambda}\{T \in \mathfrak{R}:  T \preceq A_{\lambda}\}$ with $R \preceq A_{\lambda}$ for all $\lambda$. Then each  linearly ordered subset of $\mathfrak{R}$ has
a lower bound in  $\mathfrak{R}$. We have
shown  that there exist    a minimal element $P_{\alpha}$  in the following sense:

 \text{if }$ T \in \mathfrak{R} \; \text{and }  \text{for each }  U \in \mathfrak{B}_{e} \text{ that }  P_{\alpha}x(P_{\alpha}y)^{-1} \in U \;   \text{then }   Tx (Ty)^{-1}\in U, \\
\text{  }   \text{ then }   \text{for each }  U^{\prime} \in \mathfrak{B}_{e} \text{ that }    Tx (Ty)^{-1} \in  U^{\prime}   \text{ we have }  P_{\alpha}x(P_{\alpha}y)^{-1} \in U^{\prime}$. \quad ($*$)

  Next we prove that $P_{\alpha}x \in \text{Fix}(\sc)$  for every    $x  \in C $. For a given   $x  \in C $, consider $K:=\{T(P_{\alpha}x): T \in \mathfrak{R}\}$.
   From the fact that  $\mathfrak{R}$  is     compact,  from   Proposition 3.3.18 and  Definition 3.3.19 in  \cite{runde}, we conclude that    $K$ is a nonempty    compact   subset of $C$. Now we have $S(K)\subset K$ for each $S \in \sc$, because  $STT_{\alpha}=ST$  for each  $T \in \mathfrak{R}$ hence $ST \in \mathfrak{R}$ i.e,  $K$ is $\sc$-invariant.

    From our assumption $K \cap \text{Fix}(\sc) \neq \emptyset$. Then there exists $L \in \mathfrak{R}$
such that  $L(P_{\alpha}x) \in \text{Fix}(\sc)$. Suppose that  $y=L(P_{\alpha}x)$. Since $P_{\alpha}, L \in \mathfrak{R}$ and  the set  $\{y\}$ is $\sc$-invariant,  we have $P_{\alpha}(y)=L(y)=y$, and since $L$ is group nonexpansive,  $P_{\alpha}$ is minimal and      $   L(P_{\alpha}x)(L(P_{\alpha}y) )^{-1}  =  L(P_{\alpha}x)  y^{-1}=yy^{-1}=e \in U$, for each $U\in  \mathfrak{B}_{e}$ and then we have $ (P_{\alpha}x) y^{-1}=  P_{\alpha}x (P_{\alpha}y)^{-1} \in U$  for each $U\in  \mathfrak{B}_{e}$ (to see this consider $ LP_{\alpha}$ instead of $T$ in ($*$))    and by (vi) of Corollary 1.11 in \cite{sob}, $( P_{\alpha}x) y^{-1}=e$, hence  $P_{\alpha}x=y \in  \text{Fix}(\sc)$ and this   holds  for each $x \in C$.
%\\
%Now, consider the case (b): because $P_{\alpha} \in \mathfrak{R}$ we have  $T_{i}P_{\alpha} T_{\alpha}=T_{i}P_{\alpha}$ for all $i$. Therefore it is easy to verify that  $T_{i}P_{\alpha}\in \mathfrak{R}$,  for all $i$. Using ($*$), we have
 %$$ T_{i}(P_{\alpha}x)z^{-1}=  T_{i}(P_{\alpha}x)(T_{i}P_{\alpha}z)^{-1} \in  U \text{and}  (P_{\alpha}x)z^{-1} \in U ,$$
%for each $x \in C, z \in \text{Fix}(\sc)$ and $i \in I$. Since $\{z\}$  is $\sc$-invariant and $P_{\alpha}  \in  \mathfrak{R}$ then $P_{\alpha}z=z$. Then from    (vi) of Corollary 1.11 in \cite{sob}, we have $T_{i}P_{\alpha}x=P_{\alpha}x$ for each $x \in C$ and $i \in I$. Hence, $P_{\alpha}x  \in  \text{Fix}(\sc)$,  for each $x \in C$.

 Since     $P_{\alpha} \in \mathfrak{R}$,  $T_{\alpha} \in \sc$   and  $\{P_{\alpha}x\}$ is $\sc$-invariant  for each $x \in C$, hence, it must be $P_{\alpha}$-invariant  for each $x \in C$. Then   we conclude that  $P_{\alpha}^{2}=P_{\alpha}$ and $P_{\alpha}T_{\alpha}=T_{\alpha}P_{\alpha}=P_{\alpha} $.

\end{proof}
As a consequence of Theorem \ref{shlp}, we establish an ergodic retraction by a  group nonexpansive retraction.
\begin{thm}\label{kgfdy}
Let $G$ be a topological group and let $ C $   be   a      compact subset                 of $G$.
     Suppose that $\sc = \{T_{i} : i \in I \}$ is a family of  group  nonexpansive mappings on $C$ such that
$\text{Fix}(\sc)\neq \emptyset$ and for every $\alpha \in I$,   there exists a subnet $\{T_{\alpha}^{n_{\gamma}}\}$ of the sequence $\{T_{\alpha}^{n}\}$ such that $\displaystyle\lim _{\gamma}T_{\alpha}^{n_{\gamma}}x=\displaystyle\lim _{\gamma}T_{\alpha}^{n_{\gamma}-1}x$ for each $ x \in C$.    Also suppose
   for every nonempty    compact   $\sc $-invariant subset $K$ of $C$,
$K \cap \text{Fix}(\sc) \neq \emptyset$.
 If there is a group nonexpansive retraction $R$ from $C$ onto
$\text{Fix}(\sc)$,
then for each $i \in I$, there exists a group nonexpansive retraction $P_{i}$ from $C$ onto
$\text{Fix}(\sc)$, such that $P_{i}T_{i} = T_{i}P_{i} = P_{i}$, and every   closed   $\sc \cup \{R\}$-invariant
subset of $C$ is also $P_{i}$-invariant.
\end{thm}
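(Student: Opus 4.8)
The plan is to derive this theorem directly from Theorem \ref{shlp} by enlarging the family of mappings so that it absorbs the given retraction $R$. Concretely, I would set $\sc' = \sc \cup \{R\}$, verify that $\sc'$ satisfies all three hypotheses of Theorem \ref{shlp}, and observe that the conclusion of Theorem \ref{shlp} for $\sc'$ specializes, at the indices belonging to the original family, to exactly the assertion we want.

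First I would check that $\sc'$ is a family of group nonexpansive mappings on $C$: each $T_i$ is group nonexpansive by assumption, and $R$ is group nonexpansive because it is a group nonexpansive retraction. Next I would identify the common fixed point set. Since $R$ retracts $C$ onto $\text{Fix}(\sc)$, every point of $\text{Fix}(\sc)$ is fixed by $R$, so $\text{Fix}(\sc) \subseteq \text{Fix}(R)$; consequently
\[
\text{Fix}(\sc') = \text{Fix}(\sc) \cap \text{Fix}(R) = \text{Fix}(\sc) \neq \emptyset .
\]

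The remaining hypothesis is the compact-invariant-set condition for $\sc'$. Here I would use that any $\sc'$-invariant set is automatically $\sc$-invariant, since $\sc \subseteq \sc'$. Thus if $K$ is a nonempty compact $\sc'$-invariant subset of $C$, it is in particular a nonempty compact $\sc$-invariant subset, whence $K \cap \text{Fix}(\sc) \neq \emptyset$ by hypothesis; using the identity $\text{Fix}(\sc) = \text{Fix}(\sc')$ established above, this gives $K \cap \text{Fix}(\sc') \neq \emptyset$. With all three hypotheses verified, Theorem \ref{shlp} applied to $\sc'$ yields, for each index $i \in I$ (viewed as a member of the index set of $\sc'$), a group nonexpansive retraction $P_i$ from $C$ onto $\text{Fix}(\sc') = \text{Fix}(\sc)$ satisfying $P_i T_i = T_i P_i = P_i$, with every closed $\sc'$-invariant --- that is, $\sc \cup \{R\}$-invariant --- subset of $C$ being $P_i$-invariant. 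This is precisely the stated conclusion.

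The step requiring the most care is the bookkeeping around fixed point sets and invariance rather than any genuinely new estimate: one must note that adjoining $R$ leaves $\text{Fix}(\sc)$ unchanged (so the hypothesis $\text{Fix}(\sc') \neq \emptyset$ and the enriched conclusion about $\text{Fix}(\sc')$ both refer to the same set), while it does enlarge the class of sets that must be preserved, strengthening "$\sc$-invariant" in Theorem \ref{shlp} to "$\sc \cup \{R\}$-invariant" here. Beyond this reduction to the master theorem \ref{shlp}, no further argument is needed.
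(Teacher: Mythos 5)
Your reduction is correct: adjoining $R$ to the family preserves the common fixed point set (since $R$ fixes every point of $\text{Fix}(\sc)$, one gets $\text{Fix}(\sc\cup\{R\})=\text{Fix}(\sc)$), every $\sc\cup\{R\}$-invariant set is in particular $\sc$-invariant, so hypothesis (a) transfers to the enlarged family, and Theorem \ref{shlp} applied to $\sc\cup\{R\}$ then delivers exactly the stated conclusion for the indices in $I$. The paper takes the same first step ($\sc'=\sc\cup\{R\}$) but does \emph{not} invoke Theorem \ref{shlp} as a black box: it reruns that proof with the enlarged class $\mathfrak{R}'$ and, at the one place where the compact-invariant-set hypothesis was used in Theorem \ref{shlp} (showing that the particular compact invariant set $K=\{T(P_{\alpha}x):T\in\mathfrak{R}'\}$ meets $\text{Fix}(\sc)$), it instead argues directly that $R\circ T\in\mathfrak{R}'$ for all $T\in\mathfrak{R}'$, so $\emptyset\neq R(K)\subseteq K\cap \text{Fix}(\sc)$. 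The practical difference is which hypothesis carries the load: your argument genuinely uses assumption (a) of the theorem (every nonempty compact $\sc$-invariant set meets $\text{Fix}(\sc)$), which is legitimate since it is listed among the hypotheses, and in exchange you get a shorter, fully modular proof; the paper's argument never uses (a) at all, showing that the existence of the retraction $R$ alone suffices --- information that the theorem statement does not exploit but that explains why the author reruns the proof rather than citing the theorem. Both routes are sound.
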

\begin{proof} Set  $ \sc ^{'}:= \sc \cup \{R\}$ and \\
 $\mathfrak{R^{'}}=\{T \in C^{C}: T \; \text{is  group nonexpansive}, T\circ T_{\alpha}=T\\
  \text{  } \text{ and every    closed   $\sc ^{'}$-invariant subset of $C$ is also $T$ -invariant}\}$,\\
and  we get that $\text{Fix}(\sc^{'})= \text{Fix}(\sc )$ and by replacing $\sc$ with $ \sc ^{'}$  and $\mathfrak{R}$ with $\mathfrak{R^{'}}$ in the proof of Theorem \ref{shlp}, we find a minimal element  $P_{\alpha}$ in the sense of ($*$). Now we have  $R\circ T \in \mathfrak{R^{'}}$ for each $T \in \mathfrak{R^{'}}$. Indeed,  $R\circ T\circ T_{\alpha}=R\circ T$  for each $T \in \mathfrak{R^{'}}$ and because $R \in \sc ^{'}$, we have that every  closed   $\sc ^{'}$-invariant subset of $C$ is also $R$ -invariant, and  therefore is $R\circ T$-invariant for each $T \in \mathfrak{R^{'}}$. Hence  for each $x \in C$, the set $K=\{T(P_{\alpha}x): T \in \mathfrak{R^{'}}\}$ is an $R$-invariant subset of $C$  for each $T \in \mathfrak{R^{'}}$. Therefore from the fact that $R(K) \subset K \cap   R(C)=K \cap \text{Fix}(\sc)$, we have $K \cap\text{Fix}(\sc^{'})=K \cap \text{Fix}(\sc)\neq \emptyset$. Now by repeating the reasoning used in Theorem \ref{shlp}, we  get the desired result.

\end{proof}
As an application of Theorem  \ref{kgfdy}, we have the following result:
\begin{thm}\label{bwopxh}
Let $G$ be a topological group with the topology $\tau$ and let $ C $   be   a      compact subset                 of $G$.
     Suppose that $\sc = \{T_{i} : i \in I \}$ is a family of  group  nonexpansive mappings on $C$ such that
$\text{Fix}(\sc)\neq \emptyset$ and for every $\alpha \in I$,   there exists a subnet $\{T_{\alpha}^{n_{\gamma}}\}$ of the sequence $\{T_{\alpha}^{n}\}$ such that $\displaystyle\lim _{\gamma}T_{\alpha}^{n_{\gamma}}x=\displaystyle\lim _{\gamma}T_{\alpha}^{n_{\gamma}-1}x$ for each $ x \in C$.
 Consider the following assumptions:
\begin{enumerate}
  \item [(a)]   Suppose
   for every nonempty    compact   $\sc $-invariant subset $K$ of $C$,
$K \cap \text{Fix}(\sc) \neq \emptyset$,
 \item [(b)]   there exists  a group nonexpansive retraction $R$ from $C$ onto
$\text{Fix}(\sc)$.
\end{enumerate}
Let   $\{P_{i}\}_{i \in I}$  be the family of retractions obtained in the above Theorem.
 Then for each $x \in C$, $$\overline{\{ T_{i}^{n}x: i \in I, n \in \mathbb{N}\}}^{\tau} \cap \text{Fix}(\sc)\subseteq \overline{\{ P_{i}(x): i \in I\}}^{\tau}.$$
\end{thm}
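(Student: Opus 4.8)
The plan is to exploit only the three structural properties of the retractions $P_{i}$ that assumptions (a) and (b) secure through Theorem \ref{kgfdy}: each $P_{i}$ is group nonexpansive, each $P_{i}$ is a retraction of $C$ onto $\text{Fix}(\sc)$ (so $P_{i}$ fixes every point of $\text{Fix}(\sc)$), and $P_{i}T_{i}=T_{i}P_{i}=P_{i}$. Iterating the last identity gives $P_{i}T_{i}^{n}=P_{i}$ for every $n\in\mathbb{N}$, which is the device that lets the \emph{same} index $i$ control both an orbit point and a retraction. I would first fix $x\in C$ and an element $z$ of the left-hand side, so $z\in\text{Fix}(\sc)$ and $z\in\overline{\{T_{i}^{n}x:i\in I,\ n\in\mathbb{N}\}}^{\tau}$. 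Since $G$ is a Hausdorff topological group it is regular, so the closed neighborhoods of $e$ form a local base $\mathfrak{B}_{e}$; because right translation $g\mapsto gz$ is a homeomorphism, the sets $Uz$ with $U\in\mathfrak{B}_{e}$ closed form a neighborhood base at $z$. Hence it suffices to show that each such $Uz$ meets $\{P_{j}x:j\in I\}$.

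The heart of the argument is a single computation. Fix a closed neighborhood $U$ of $e$. As $Uz$ is a neighborhood of $z$ and $z$ lies in the $\tau$-closure of $\{T_{i}^{n}x\}$, there are $i\in I$ and $n\in\mathbb{N}$ with $T_{i}^{n}x\in Uz$, that is, $T_{i}^{n}x\,z^{-1}\in U$. Now apply group nonexpansiveness of $P_{i}$ to the pair $(T_{i}^{n}x,\,z)\in C\times C$: since $U$ is a closed neighborhood of $e$ and $T_{i}^{n}x\,z^{-1}\in U$, we get $P_{i}(T_{i}^{n}x)\bigl(P_{i}z\bigr)^{-1}\in U$. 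Here $P_{i}z=z$ because $z\in\text{Fix}(\sc)$ and $P_{i}$ is a retraction onto $\text{Fix}(\sc)$, while $P_{i}(T_{i}^{n}x)=P_{i}x$ by the identity $P_{i}T_{i}^{n}=P_{i}$. Therefore $P_{i}x\,z^{-1}\in U$, i.e. $P_{i}x\in Uz$, so $Uz$ indeed contains a member of $\{P_{j}x:j\in I\}$.

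Since every basic neighborhood $Uz$ of $z$ meets $\{P_{i}x:i\in I\}$, I would conclude that $z\in\overline{\{P_{i}(x):i\in I\}}^{\tau}$, which is the desired inclusion. I expect the computation itself to be routine once the bookkeeping is set up correctly; the only steps deserving care are topological. First, one must justify that $z\in\text{Fix}(\sc)$ is genuinely needed and used (it is what makes $P_{i}z=z$, and hence the conclusion, valid). Second, and this is the main obstacle, one must argue cleanly that closed translates $Uz$ form a neighborhood base at $z$ and that the index $i$ appearing in the orbit point $T_{i}^{n}x$ is the \emph{same} index whose retraction $P_{i}$ is then invoked; both points rest on regularity of topological groups together with the commutation identity $P_{i}T_{i}^{n}=P_{i}$ from Theorem \ref{kgfdy}. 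Note that (a) and (b) enter the proof of this theorem only indirectly, as the hypotheses guaranteeing the existence of the family $\{P_{i}\}_{i\in I}$.
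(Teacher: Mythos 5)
Your proposal is correct and follows essentially the same route as the paper's own proof: pick $z$ in the left-hand side, find $i,n$ with $T_i^n x\,z^{-1}\in U$, and use $P_iT_i^n=P_i$, $P_iz=z$, and group nonexpansiveness of $P_i$ to conclude $P_ix\,z^{-1}\in U$. The only cosmetic difference is that you justify $P_iz=z$ directly from $P_i$ being a retraction onto $\text{Fix}(\sc)$, whereas the paper routes it through the $P_i$-invariance of the closed $\sc$-invariant singleton $\{z\}$; both are valid.
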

\begin{proof}
  Let  $g \in \overline{\{ T_{i}^{n}x: i \in I, n \in \mathbb{N}\}}^{\tau} \cap \text{Fix}(\sc)$. Then for each $ U \in \mathfrak{B}_{e} $,  there exists $i \in I$ and $ n \in \mathbb{N}$ such that $ T_{i}^{n}x g^{-1} \in U$. From our  assumptions  and using Theorems \ref{shlp}  and \ref{kgfdy}, there exists a  group nonexpansive retraction $P_{i}$ such that $P_{i}=P_{i}T_{i}$ and since from   Theorems \ref{shlp} and  \ref{kgfdy} every   closed   $\sc$-invariant  or  $\sc \cup \{R\}$-invariant
subset of $C$ is also $P_{i}$-invariant then we have $P_{i}g=g$ for each $i \in I$. Hence from the fact that $P_{i}$ is  group nonexpansive and since  $ T_{i}^{n}x g^{-1} \in U$ then we have,
$$ (P_{i}x)g^{-1} =  (P_{i}T_{i}^{n}x )(P_{i}T_{i}^{n}g)^{-1}  \in U,$$
and then we conclude $g \in \overline{\{ P_{i}(x): i \in I\}}^{\tau}$.
\end{proof}

\section{Examples and applications}
Recall every locally convex space is a topological group by the topology generated by a family of seminorms. Theorem \ref{shlp} extends and generalizes   Theorem 4.1(a) in \cite{soori}, by removing  the "convex" and  "separated" conditions     as follows:
\begin{co}
Suppose that  $Q$ is a   family of   seminorms on a    locally convex space $E$ which determines the topology of $E$. Let $ C $   be   a  compact    subset of $E$.
     Suppose that $\sc = \{T_{i} : i \in I \}$ is a family of $Q$-nonexpansive mappings on $C$ such that
$\text{Fix}(\sc)\neq \emptyset$ and for every $\alpha \in I$,   there exists a subnet $\{T_{\alpha}^{n_{\gamma}}\}$ of the sequence $\{T_{\alpha}^{n}\}$ such that $\displaystyle\lim _{\gamma}T_{\alpha}^{n_{\gamma}}x=\displaystyle\lim _{\gamma}T_{\alpha}^{n_{\gamma}-1}x$ for each $ x \in C$. If
 for every nonempty    compact   $\sc $-invariant subset $K$ of $C$,
$K \cap \text{Fix}(\sc) \neq \emptyset$,
then, for each $i \in I$, there exists a  group nonexpansive retraction $P_{i}$ from $C$ onto
$\text{Fix}(\sc)$, such that $P_{i}T_{i} = T_{i}P_{i} = P_{i}$  and every   closed   $\sc$-invariant
subset of $C$ is also $P_{i}$-invariant.
\end{co}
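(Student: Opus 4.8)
The plan is to obtain this corollary as a direct specialization of Theorem \ref{shlp}. The first observation is that the locally convex space $E$, carrying the topology determined by $Q$, is an abelian topological group under vector addition: its identity element is $0$, the inverse of $x$ is $-x$, and the continuity of $(x,y)\mapsto x+y$ and of $x\mapsto -x$ is part of the definition of a topological vector space. Under this dictionary the group product $xy^{-1}$ becomes the vector difference $x-y$ and a neighborhood of the identity becomes a neighborhood of $0$, so that every hypothesis and conclusion of Theorem \ref{shlp} translates verbatim into the seminorm language of $E$.

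The essential step is to verify that $Q$-nonexpansiveness implies group nonexpansiveness, thereby placing $\sc$ within the scope of Theorem \ref{shlp}. For this I would fix the local base $\mathfrak{B}_{0}$ at $0$ consisting of the closed neighborhoods $U=\bigcap_{k=1}^{n}\{z\in E: p_{k}(z)\leq r_{k}\}$ with $p_{k}\in Q$ and $r_{k}>0$. If $T$ is $Q$-nonexpansive and $x-y\in U$, then $p_{k}(x-y)\leq r_{k}$ for every $k$, and the inequality $p_{k}(Tx-Ty)\leq p_{k}(x-y)$ forces $p_{k}(Tx-Ty)\leq r_{k}$ for every $k$, so $Tx-Ty\in U$; this is exactly group nonexpansiveness, extending the normed-space computation given earlier from a single norm to the whole family $Q$. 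Granting this, $\sc$ is a family of group nonexpansive self-maps of the compact set $C$ with $\text{Fix}(\sc)\neq\emptyset$ for which the compact $\sc$-invariant intersection hypothesis holds, so Theorem \ref{shlp} supplies, for each $i\in I$, a retraction $P_{i}$ onto $\text{Fix}(\sc)$ with $P_{i}T_{i}=T_{i}P_{i}=P_{i}$ and the asserted invariance of closed $\sc$-invariant sets.

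The point I expect to require the most care is the separation hypothesis. Since $Q$ is not assumed to separate points, $E$ need not be Hausdorff, whereas the proof of Theorem \ref{shlp} uses the Hausdorff property twice: to know that each singleton $\{z\}$ with $z\in\text{Fix}(\sc)$ is closed, and to pass from the membership $(P_{\alpha}x)y^{-1}\in U$ for all $U\in\mathfrak{B}_{0}$ to the equality $(P_{\alpha}x)y^{-1}=0$. In a general locally convex space both steps hold precisely when $N:=\bigcap_{p\in Q}\{w\in E: p(w)=0\}$ reduces to $\{0\}$. To genuinely remove the separation assumption I would pass to the Hausdorff quotient $E/N$, whose topology is generated by the induced seminorms, apply the argument of Theorem \ref{shlp} to the compact set $q(C)$ and the $Q$-nonexpansive maps induced there by $q\colon E\to E/N$, and then transport the resulting retraction back to $C$. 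Checking that this descent is well defined and that the commutation and invariance identities survive the lift is the only genuinely delicate part; everything else is the routine translation described above.
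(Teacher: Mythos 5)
Your proposal takes essentially the same route the paper intends: the corollary is stated as a direct specialization of Theorem \ref{shlp} (the paper gives no separate proof), and your two ingredients --- viewing $(E,+)$ as a topological group and checking that $Q$-nonexpansiveness gives group nonexpansiveness for the closed local base $\bigcap_{k}\{z: p_{k}(z)\leq r_{k}\}$ --- are exactly the translation the paper relies on, the second being the seminorm version of its Example 2.2. Your remark about the separation hypothesis is a genuine point the paper glosses over: it advertises removing the ``separated'' condition, yet its own standing convention imposes Hausdorffness on topological groups and the proof of Theorem \ref{shlp} uses it twice (closedness of singletons in $\text{Fix}(\mathcal{S})$, and the final step $(P_{\alpha}x)y^{-1}=e$), so in the non-separated case the corollary does not follow verbatim. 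Be aware, though, that your proposed repair via the quotient $E/N$ is only a sketch, and the hard part is precisely what you defer: the induced maps on $q(C)$ descend because $p(T_{i}x-T_{i}y)\leq p(x-y)=0$ for $x-y\in N$, but lifting the retraction back to $C$ requires a coherent choice of representatives under which $P_{i}$ still lands in $\text{Fix}(\mathcal{S})\subseteq C$ (not merely in a union of cosets) and still satisfies $P_{i}T_{i}=T_{i}P_{i}=P_{i}$ pointwise on $C$; as written this is not established, so for the separated case your argument is complete and matches the paper, while for the non-separated case it remains an outline.
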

  Theorem \ref{shlp} extends and generalizes   Theorem 2.1(a) in \cite{said2012}, by removing the "convex"  condition      as follows:
\begin{co}
Let $ C $   be   a  compact    subset of     a Banach space $E$.
     Suppose that $\sc = \{T_{i} : i \in I \}$ is a family of  nonexpansive mappings on $C$ such that
$\text{Fix}(\sc)\neq \emptyset$ and for every $\alpha \in I$,   there exists a subnet $\{T_{\alpha}^{n_{\gamma}}\}$ of the sequence $\{T_{\alpha}^{n}\}$ such that $\displaystyle\lim _{\gamma}T_{\alpha}^{n_{\gamma}}x=\displaystyle\lim _{\gamma}T_{\alpha}^{n_{\gamma}-1}x$ for each $ x \in C$. If
 for every nonempty    compact   $\sc $-invariant subset $K$ of $C$,
$K \cap \text{Fix}(\sc) \neq \emptyset$,
then, for each $i \in I$, there exists a  group nonexpansive retraction $P_{i}$ from $C$ onto
$\text{Fix}(\sc)$, such that $P_{i}T_{i} = T_{i}P_{i} = P_{i}$  and every   closed   $\sc$-invariant
subset of $C$ is also $P_{i}$-invariant.
\end{co}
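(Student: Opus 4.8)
The plan is to obtain this corollary as an immediate specialization of Theorem~\ref{shlp}. The only real task is to recognize the Banach space $E$ as a topological group and to verify that the hypothesis of nonexpansiveness is a particular case of group nonexpansiveness. First I would note that $(E,+)$ is an abelian topological group under the norm topology: the maps $(x,y)\mapsto x+y$ and $x\mapsto -x$ are continuous, and $E$ is Hausdorff because it is metrizable. Thus $G=E$ satisfies the standing hypotheses of Theorem~\ref{shlp}, with the group operation written additively and the identity element being $0$.

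Next I would promote each nonexpansive $T_i$ to a group nonexpansive map by reusing the computation recorded in the first example of Section~2. For $x,y\in C$ take any closed neighborhood of the identity of the form $U=\{z\in E:\|z\|\le r\}$ with $x-y\in U$; nonexpansiveness gives $\|T_ix-T_iy\|\le\|x-y\|\le r$, so $T_ix-T_iy\in U$. Since such sets form a local base at $0$, every $T_i$ is group nonexpansive, and hence $\sc=\{T_i:i\in I\}$ is a family of group nonexpansive mappings on $C$.

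With $E$ viewed as a topological group and $\sc$ as a family of group nonexpansive mappings, the remaining hypotheses---namely $C$ compact, $\text{Fix}(\sc)\neq\emptyset$, and $K\cap\text{Fix}(\sc)\neq\emptyset$ for every nonempty compact $\sc$-invariant $K\subseteq C$---are carried over verbatim from the statement. Applying Theorem~\ref{shlp} then yields, for each $i\in I$, a group nonexpansive retraction $P_i$ of $C$ onto $\text{Fix}(\sc)$ with $P_iT_i=T_iP_i=P_i$ that leaves every closed $\sc$-invariant subset invariant, which is precisely the assertion. I do not expect any genuine obstacle: the argument is purely a verification that the abstract framework of Theorem~\ref{shlp} subsumes the Banach space setting, the only delicate point being the confirmation that the balls $\{z:\|z\|\le r\}$ constitute a local base at $0$, so that the quantifier over closed neighborhoods in the definition of group nonexpansiveness is correctly met.
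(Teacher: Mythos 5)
Your proposal is correct and matches the paper's intended derivation: the paper offers no separate proof of this corollary, presenting it as an immediate specialization of Theorem~\ref{shlp} once $E$ is viewed as an additive topological group and nonexpansive maps are recognized as group nonexpansive via the closed-ball local base, exactly as in the paper's first example of Section~2. Nothing further is needed.
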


In the following example for Theorem \ref{shlp}, we present a family of retractions without assuming a convexity condition on $K$.
\begin{ex}\label{asfgb}
Let $G=\mathbb{R}$ with the  usual topology and $C=[0,2]$.
     Suppose that $\sc = \{T_{n} : n=2,3,4, \cdots \}$ is a family of the  group  nonexpansive mappings on $C$ such that
\begin{equation*}
T_{n}(x)= \left\{
       \begin{array}{ll}
         x, &  x  \in [0,\frac{2n}{2n-1}]; \\
         ( \frac{1}{n}-1)x+2, & x  \in (\frac{2n}{2n-1},2].
       \end{array}
     \right.
\end{equation*}
First note that $T_{n}^{2}x=T_{n}x$ for each  $x \in[0,2]$. Indeed if $x \in   [0,\frac{2n}{2n-1}]$   then $T_{n}^2x=T_{n}(T_{n}x)=x=T_{n}x$. Next, let $x \in (\frac{2n}{2n-1}, \frac{n}{n-1} )$ and then $ 1 <  (\frac{1}{n}-1)x+2 <\frac{2n}{2n-1} $ so $T_{n}^{2}x=T_{n}((\frac{1}{n}-1)x+2)=(\frac{1}{n}-1)x+2=T_{n}x$. Finally, if $x \in [ \frac{n}{n-1} , 2]$,  then we have $0\leq(\frac{1}{n}-1)x +2 \leq 1$, and hence    $$T_{n}^2x =T_{n}(T_{n}x)= T_{n}(( \frac{1}{n}-1)x+2)= ( \frac{1}{n}-1)x+2=T_{n}x,$$ so $T_{n}^2=T_{n}$. Hence,  $\displaystyle\lim _{m \rightarrow \infty}T_{n}^{m}x=\displaystyle\lim _{m\rightarrow \infty}T_{n}^{m-1}x= \displaystyle\lim _{m\rightarrow \infty}T_{n}x=T_{n}x$ for each  $x \in[0,2]$. Then the condition in Theorem \ref{shlp} is true.

Next we show that $T_{n}$ is group nonexpansive.  Let $x \in (\frac{2n}{2n-1} , 2]$ and $y \in [0, \frac{2n}{2n-1}]$. Note
$2(y-1)\leq \frac{2}{2n-1}\leq\frac{1}{n}x$ so $ -\frac{1}{n}x \leq -2(y-1)$,   $ -\frac{1}{n}x -2 \leq -2y$ and hence
\begin{equation*}\label{kkk}
 -2(x-y)\leq (\frac{1}{n}-2)x +2 \leq 0,
\end{equation*}
and therefore since  $x-y\geq 0$ we have
\begin{align*}
 |T_{n}(x)-T_{n}(y)|&=   |(\frac{1}{n}-1)x +2-y|  =|x-y+(\frac{1}{n}-2)x +2| \\ & \leq |x-y-2(x-y)|=|x-y|.
\end{align*}
The other  cases are easy. Hence $T_{n}$ is a group nonexpansive mapping, for each $n=2, 3, \ldots$.
Note $\text{Fix}(\sc)=[0,1]$.   Also  for
     every nonempty    compact   $\sc $-invariant subset $K$ of $C$,
$K \cap \text{Fix}(\sc) \neq \emptyset$. Indeed,  since $K$ is   $\sc $-invariant, then for each $x \in K  \cap [1,2]$, there exists a $n_{1}\in \mathbb{N}$ such that $x \in [\frac{2n_{1}}{2n_{1}-1}, 2]$  and $T_{n}(x)=( \frac{1}{n}-1)x+2 \in K$ for each $n\geq n_{1}$. Let $n\to \infty$ (note  $K$ is closed) so   $-x+2 \in K$. However  $-x+2 \in[0, \frac{2n_{1}-2}{2n_{1}-1}] \subset [0,1]$,  so $K \cap \text{Fix}(\sc) =K\cap [0,1] \neq \emptyset$. We now show that   $1\in K $. Indeed,   $-x+2 \in K$,  for each $x \in (1,2]$, because if $x \in (1,2]$ then there exists an integer    $n_{2} \in \mathbb{N}$ such that $x \in (\frac{2n_{2}}{2n_{2}-1}, 2]$ and therefore for each $n\geq n_{2}$, $T_{n}(x)=( \frac{1}{n}-1)x+2 \in K$, so let $n\to \infty$ and we have  $-x+2 \in K$ for each  $x\in(1,2]$.  Put   $x=0.1, 0.01, 0.001, \ldots $  and we get  $0.9, 0.99, 0.999, \ldots \in K$, and hence since $K$ is closed, $1\in K$.

Now define $P_{n}$ by :
\begin{equation*}
P_{n}(x)= \left\{
       \begin{array}{ll}
         x, &  x  \in [0,1]; \\
         1, &   x  \in (1,\frac{n}{n-1}); \\
         ( \frac{1}{n}-1)x+2, & x \in [\frac{n}{n-1} , 2].
                  \end{array}
     \right.
\end{equation*}
First, we show that  for each $n=2, 3,\cdots$, $P_{n}^2=P_{n}$.   Let $x \in   [0,1]$ and then $P_{n}^2x=P_{n}(P_{n}x)=P_{n}x$. Next, let $x \in (1, \frac{n}{n-1} )$ and then $P_{n}^2x=P_{n}(P_{n}x)=P_{n}(1)=1=P_{n}x$. Finally, if $x \in [ \frac{n}{n-1} , 2]$,  then we have $0\leq(\frac{1}{n}-1)x +2 \leq 1$, and hence    $$P_{n}^2x =P_{n}(P_{n}x)= P_{n}(( \frac{1}{n}-1)x+2)= ( \frac{1}{n}-1)x+2=P_{n}x,$$ so $P_{n}^2=P_{n}$.

Next we show that for each $n=2, 3,\cdots$, $P_{n}$ is a  group nonexpansive retraction  from $C$ onto
$\text{Fix}(\sc)$. If $x \in [\frac{n}{n-1} , 2]$ and $y \in (1, \frac{n}{n-1} )$,   we have
$\frac{x}{n} \geq \frac{1}{n-1}$ and then $y-\frac{x}{n}\leq 1$, so $x-\frac{x}{n}-1 \leq x-y$ and hence we get $(\frac{1}{n}-1)x+1\leq x-y$,
so $$|P_{n}(x)-P_{n}(y)|\leq |x-y|.$$  If $x \in [\frac{n}{n-1} , 2]$ and $y \in [0, 1]$, then we have
$2(y-1)\leq \frac{1}{n}x$  and then $ -\frac{1}{n}x \leq -2(y-1)$,  so $ -\frac{1}{n}x -2 \leq -2y$, and  hence
\begin{equation*}\label{kkk}
 -2(x-y)\leq (\frac{1}{n}-2)x +2 \leq 0,
\end{equation*}
and therefore since  $x-y\geq 0$ we have
\begin{align*}
 |P_{n}(x)-P_{n}(y)|&=   |(\frac{1}{n}-1)x +2-y|  =|x-y+(\frac{1}{n}-2)x +2| \\ & \leq |x-y-2(x-y)|=|x-y|.
\end{align*}
The other cases are easy. Hence $P_{n}$ is a group nonexpansive mapping.
Next to show     $P_{n}T_{n} = T_{n}P_{n} = P_{n}$.  First we prove $T_{n}P_{n} = P_{n}$.  The case  $x \in [0,1]$ is clear. Let $x \in (1, \frac{n}{n-1})$. Then  we have $T_{n}P_{n}x = T_{n}(1)=1=P_{n}x$. Finally, let  $x \in [\frac{n}{n-1} , 2]$. Then we have $\frac{2}{n}\leq(\frac{1}{n}-1)x +2 \leq1$ so  $$T_{n}P_{n}x = T_{n}((\frac{1}{n}-1)x +2)=(\frac{1}{n}-1)x +2=P_{n}x.$$ Next we show $P_{n}T_{n} = P_{n}$. Let $x \in (0, \frac{2n}{2n-1})$. Clearly  we have $P_{n}T_{n}x = P_{n}x$. Let  $x \in (\frac{2n}{2n-1} , 2]$. Then we have $$P_{n}T_{n}x = P_{n}((\frac{1}{n}-1)x +2) = P_{n}x;$$ to see this we consider two cases; (a): if $x \in (\frac{2n}{2n-1} , \frac{n}{n-1}]$, then we have $$1\leq(\frac{1}{n}-1)x +2 \leq \frac{2n}{2n-1}< \frac{n}{n-1},$$ so  $$P_{n}T_{n}x = P_{n}((\frac{1}{n}-1)x +2) = 1= P_{n}x;$$ (b): if $x \in (\frac{n}{n-1} , 2]$, then we have $0\leq(\frac{1}{n}-1)x +2 \leq 1$, so  $$P_{n}T_{n}x = P_{n}((\frac{1}{n}-1)x +2) = (\frac{1}{n}-1)x +2= P_{n}x.$$

  Now, we show that  every   closed   $\sc$-invariant
subset $K$ of $C$ is also $P_{n}$-invariant for each $n=2, 3,\cdots$. First if $x \in K\cap [0,1]$ then $P_{n}x=x \in K$. Next if $x \in K \cap (1, \frac{n}{n-1} )$ then $P_{n}x=1 \in K$. Finally, if $x \in K \cap [ \frac{n}{n-1} , 2]$, then $x \in K \cap [ \frac{2n}{2n-1} , 2]$, and hence  from the fact that $P_{n}x = ( \frac{1}{n}-1)x+2 = T_{n}x\in K$, we have  $P_{n}x \in K$. Therefore, $K$   is also $P_{n}$-invariant for each $n=2, 3,\cdots$.
\end{ex}

\begin{center}
  \bf{ Acknowledgements}
\end{center}
The third    author is  grateful to  the University of Lorestan for their support.

\end{large}
% ------------------------------------------------------------------------
\end{document}